	\newtheorem{thm}{Theorem}
	\newtheorem{lemma}{Lemma}
	\newtheorem{rk}{Remark}
	\numberwithin{equation}{section} \setcounter{tocdepth}{1}
	\newcommand{\bea}{\begin{eqnarray}}
		\newcommand{\eea}{\end{eqnarray}}
\begin{document}
		\title [Evolutionary behavior in a two-locus system]
		{Evolutionary behavior in a two-locus system}
		
		\author{A. M. Diyorov, U. A. Rozikov}
		
		\address{A. \ M. \ Diyorov \\ The Samarkand branch of TUIT, Samarkand, Uzbekistan.}
		\email {dabduqahhor@mail.ru}
		
		\address{ U.A. Rozikov$^{a,b,c}$\begin{itemize}
				\item[$^a$] V.I.Romanovskiy Institute of Mathematics,  9, Universitet str., 100174, Tashkent, Uzbekistan;
				\item[$^b$] AKFA University,
				264, Milliy Bog street,	Yangiobod QFY, Barkamol MFY,
				Kibray district, 111221, Tashkent region, Uzbekistan;
				\item[$^c$] National University of Uzbekistan,  4, Universitet str., 100174, Tashkent, Uzbekistan.
		\end{itemize}}
		\email{rozikovu@yandex.ru}
		
		\begin{abstract}
		In this short note we study a dynamical system generated by a two-parametric quadratic operator mapping 3-dimensional simplex to itself. This is an evolution operator of the frequencies of gametes in a two-locus system. We find the set of all (a continuum set) fixed points and show that each fixed point is non-hyperbolic. We completely describe the set of all limit points of the dynamical system. Namely, for any initial point (taken from the 3-dimensional simplex) we find an invariant set containing the initial point and a unique fixed point of the operator, such that the trajectory of the initial point converges to this fixed point. 		
		\end{abstract}
		\maketitle
{\bf Mathematics Subject Classifications (2010).} 37N25, 92D10.

{\bf{Key words.}} loci, gamete, dynamical system, fixed point, trajectory, limit point.
		
		\section{Introduction}
		
In this paper following \cite[page 68]{E} we define an evolution operator of a population assuming viability selection, random mating and discrete non-overlapping generations. Consider two loci $A$ (with alleles $A_1$, $A_2$) and $B$ (with alleles $B_1$, $B_2$). Then we have four gametes: $A_1B_1$, $A_1B_2$, $A_2B_1$, and $A_2B_2$. Denote the frequencies of these gametes by $x$, $y$, $u$, and $v$ respectively.  
Thus the vector $(x, y, u, v)$ can be considered as a state of the system, and therefore, one takes it as a probability distribution on the set of gametes, i.e. as an element of  3-dimensional simplex, $S^3$.
Recall that $(m-1)$-dimensional simplex is defined as
		$$ S^{m-1}=\{x=(x_1,...,x_m)\in \mathbb R^m: x_i\geq 0,
		\sum^m_{i=1}x_i=1 \}.$$
		
Following \cite[Section 2.10]{E} we define the frequencies $(x', y', u', v')$ in the next generation as 
\begin{equation}\label{yq}	
W:	\begin{array}{llll}
x'=x+a\cdot (yu-xv)\\[2mm]
y'=y-a\cdot (yu-xv)\\[2mm]
u'=u-b\cdot (yu-xv)\\[2mm]
v'=v+b\cdot (yu-xv),
\end{array}\end{equation}
where $a,b\in [0,1]$.

It is easy to see that this quadratic operator, $W$, maps $S^3$ to itself. Indeed, we have $x'+y'+u'+v'=1$ and each coordinate is non-negative, for example, we check it for $y'$:
$$ y'=y-a\cdot (yu-xv)=y(1-au)+axv\geq y(1-au)\geq 0,$$
these inequalities follow from the conditions that $x,y,u,v,a\in [0,1]$, and therefore, we have $0\leq au\leq 1$.

The operator (\ref{yq}), for any initial point (state)  $t_0=(x_0, y_0, u_0, v_0)\in S^3$, defines its trajectory:
$\{t_n=(x_n, y_n, u_n, v_n)\}_{n=0}^\infty$ as
$$t_{n}=(x_{n}, y_{n}, u_{n}, v_{n})=W^{n}(t_0), n=0,1,2,...$$
Here $W^n$ is the $n$-fold composition of $W$ with itself:
$$W^n(\cdot)=\underbrace{W(W(W\dots (W}_{n \,{\rm times}}(\cdot)))\dots).$$ 

{\bf The main problem} in theory of dynamical system (see \cite{De}) is to study the sequence  $\{t_n\}_{n=0}^\infty$ for each initial point $t_0\in S^3$.

In general, if a dynamical system is generated by a nonlinear operator then complete solution of the main problem may be very difficult. But in  this short note we will completely solve this main problem for nonlinear operator (\ref{yq}).

\begin{rk}
	Using $1=x+y+u+v$ (on $S^3$) one can rewrite operator (\ref{yq}) as 
	\begin{equation}\label{yqs}	
		W:	\begin{array}{llll}
			x'=x^2+xy+xu+(1-a)xv+ayu\\[2mm]
			y'=xy+y^2+(1-a)yu+axv+yv\\[2mm]
			u'=xu+(1-b)yu+u^2+bxv+uv\\[2mm]
			v'=(1-b)xv+yv+byu+uv+v^2.
	\end{array}\end{equation}
	Note that the operator (\ref{yqs}) is in the form of quadratic stochastic operator (QSO), i.e., 
	$V: S^{m-1}\to S^{m-1}$ defined by 
	$$V: x_k'=\sum_{i,j=1}^mP_{ij,k}x_ix_j,$$
	where $P_{ij,k}\geq 0$, $\sum_kP_{ij,k}=1$.
	
	The operator is not studied in general, but some large class of QSO's are studied (see for example \cite{GMR}, \cite{L}, \cite{Rpd}, \cite{RS}, \cite{RZ}, \cite{RZh} and the references therein). But the operator (\ref{yq}) was not studied yet.  
\end{rk}
\section{The set of limit points}

\begin{rk} The case $a=b=0$ is very trivial, so we will not consider this case.\end{rk}
Recall that a point $t\in S^3$ is called a fixed point for $W: S^3\to S^3$ if $W(t)=t$. 

Denote the set of all fixed
points by Fix$(W)$.

It is easy to see that for any $a, b\in [0,1]$, $a+b\ne 0$ the set of all fixed points of (\ref{yq}) is 
$${\rm Fix}(W)=\{t=(x,y,u,v)\in S^3: yu-xv=0\}.$$
This is a continuum set of fixed points. 

The main problem is completely solved in the following result: 
\begin{thm}\label{tm} For any initial point $(x_0, y_0, u_0, v_0)\in S^3$  the following assertions hold 
	\begin{itemize}
		\item[1.] If $(x_0+y_0)(u_0+v_0)=0$ then $(x_0, y_0, u_0, v_0)$ is fixed point.
		\item[2.] If $(x_0+y_0)(u_0+v_0)\ne 0$ then trajectory has the following limit:
		$$		\lim_{n\to\infty}(x_{n}, y_n, u_{n}, v_n)=$$
		$$\left(A(x_0, u_0)(x_0+y_0), A(y_0, v_0)(x_0+y_0), A(x_0, u_0)(u_0+v_0), A(y_0, v_0)(u_0+v_0)\right)\in {\rm Fix}(W), 
		$$
		where 
		$$A(x,u)={bx+au\over (u_0+v_0)a+(x_0+y_0) b}.$$
	\end{itemize}	
	
\end{thm}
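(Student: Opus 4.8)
The plan is to reduce everything to the single scalar $\xi_n:=y_nu_n-x_nv_n$ — whose vanishing, by the discussion preceding the theorem, characterizes $\mathrm{Fix}(W)$ — together with the two conserved sums $p:=x+y$ and $q:=u+v$.

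Part 1 will be immediate. If $(x_0+y_0)(u_0+v_0)=0$, say $x_0+y_0=0$, then non‑negativity of the coordinates forces $x_0=y_0=0$, so $\xi_0=y_0u_0-x_0v_0=0$ and $(x_0,y_0,u_0,v_0)\in\mathrm{Fix}(W)$; the case $u_0+v_0=0$ is symmetric.

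For Part 2 I would proceed in three steps. First, adding the first two lines of (\ref{yq}) and then the last two shows $x_{n+1}+y_{n+1}=x_n+y_n$ and $u_{n+1}+v_{n+1}=u_n+v_n$, so $p_n\equiv p:=x_0+y_0$ and $q_n\equiv q:=u_0+v_0$ for all $n$, with $p+q=1$ and, under the Part 2 hypothesis, $p,q>0$. Second, substituting $x_{n+1}=x_n+a\xi_n$, $y_{n+1}=y_n-a\xi_n$, $u_{n+1}=u_n-b\xi_n$, $v_{n+1}=v_n+b\xi_n$ into $\xi_{n+1}=y_{n+1}u_{n+1}-x_{n+1}v_{n+1}$, the terms quadratic in $\xi_n$ cancel and one is left with the \emph{linear} recursion
$$\xi_{n+1}=(1-D)\,\xi_n,\qquad D:=bp+aq.$$
Since $p,q>0$ and $a+b>0$ we have $D>0$, and since $D\le\max(a,b)\le1$ we have $0\le 1-D<1$; hence $\xi_n=(1-D)^n\xi_0\to0$ (geometrically, and identically zero for $n\ge1$ in the degenerate case $D=1$). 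Third, telescoping the coordinate recursions gives $x_n=x_0+a\sum_{k=0}^{n-1}\xi_k=x_0+a\xi_0\frac{1-(1-D)^n}{D}$, so $x_n\to x_0+\frac{a\xi_0}{D}$ and likewise $y_n\to y_0-\frac{a\xi_0}{D}$, $u_n\to u_0-\frac{b\xi_0}{D}$, $v_n\to v_0+\frac{b\xi_0}{D}$ (the existence of these limits also re‑confirms they are $\ge0$). To match the statement I would then verify the algebraic identity $Dx_0+a\xi_0=(x_0+y_0)(bx_0+au_0)$ using $\xi_0=y_0u_0-x_0v_0$ and $x_0+y_0+u_0+v_0=1$; its right-hand side is exactly $A(x_0,u_0)(x_0+y_0)$, and the other three coordinates reduce in the same way. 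A one-line check that the limit vector has coordinate sum $1$ and satisfies $yu-xv=0$ places it in $\mathrm{Fix}(W)$.

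The computations are routine; the two points deserving attention are the cancellation of the quadratic $\xi_n^2$ terms — this is what makes the $\xi$-recursion linear and the whole dynamics tractable — and the strict positivity $D>0$ in Part 2, which is precisely where the hypotheses $a+b\ne0$ and $(x_0+y_0)(u_0+v_0)\ne0$ are used. I do not anticipate any genuine analytic obstacle beyond bookkeeping once $p$, $q$ and $\xi$ are isolated.
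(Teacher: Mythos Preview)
Your proof is correct and takes a genuinely different route from the paper's. Both arguments begin with the conservation of $p=x+y$ and $q=u+v$ (the paper phrases this as invariance of the slices $X_\alpha=\{x+y=\alpha\}$), but from there they diverge. The paper restricts $W$ to $X_\alpha$, obtaining a two-dimensional \emph{linear} map in the coordinates $(x,u)$ with matrix $M_\alpha$; it then computes the eigenvalues $\lambda_1=1$ and $\lambda_2=1-(1-\alpha)a-\alpha b$, invokes Cayley--Hamilton to write $M_\alpha^n$ in closed form, and reads off $\lim_n M_\alpha^n$. Your reduction is sharper: the cancellation of the $ab\xi_n^2$ terms collapses the dynamics to the one-dimensional geometric recursion $\xi_{n+1}=(1-D)\xi_n$, and the coordinate limits then drop out by telescoping a geometric series, with no matrix algebra at all. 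The paper's approach is the textbook one (invariant foliation $\to$ linear restriction $\to$ spectral analysis) and makes the non-hyperbolicity of every fixed point visible via the eigenvalue $1$; yours is shorter, foregrounds the linkage-disequilibrium quantity $\xi$ from the outset, and makes the convergence rate $(1-D)^n$ immediate. The two are consistent: your $D=bp+aq$ is precisely the paper's $1-\lambda_2$.
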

\begin{proof}
We note that for each $\alpha\in [0,1]$ the following set is invariant:
$$X_{\alpha}=\{t=(x,y,u,v)\in S^3: x+y=\alpha, \ \ u+v=1-\alpha\},$$
i.e., $W(X_\alpha)\subset X_\alpha$.

Note also that 
$$S^3=\bigcup_{\alpha\in [0,1]} X_\alpha.$$

The part 1 of theorem follows in the case $\alpha=0$ and $\alpha=1$. Indeed, for $\alpha=0$ we have 
	$$
	X_{0}=\{t=(0,0,u,v)\in S^3: u+v=1\},$$ 
	and in the case of $\alpha=1$ we get 
		$$
	X_{1}=\{t=(x,y,0,0)\in S^3: x+y=1\}.$$ 
	Note that in both case the restriction of operator on the corresponding set is an id-operator, i.e., all points of the set are fixed points.  
	
Now to prove part 2 we consider the case $\alpha\in (0,1)$.

Since $X_\alpha$ is an invariant, it suffices to study limit points of the operator on sets $X_\alpha$, for each $\alpha\in (0,1)$ separately. 
To do this, we reduce operator $W$ on the invariant set $X_\alpha$ (i.e., replace $y=\alpha-x$, $v=1-\alpha-u$):
\begin{equation}\label{ya}	
	W_\alpha:	\begin{array}{ll}
		x'=(1-a+a\alpha)x+a\alpha u\\[2mm]
		u'=(1-\alpha)bx+(1-b\alpha)u,
\end{array}\end{equation}
where $a,b \in [0,1]$, $\alpha\in (0,1)$ $x\in [0,\alpha]$, $u\in [0, 1-\alpha]$.

It is easy to find the set of all fixed points: 
$${\rm Fix}(W_{\alpha})=\{(x,u)\in [0,\alpha]\times [0,1-\alpha]: (1-\alpha)x-\alpha u=0\}.$$

The operator $W_\alpha$ is a linear operator given by the matrix
\begin{equation}\label{m}	
M_\alpha=\left(	\begin{array}{cc}
		1-a+a\alpha &a\alpha \\[2mm]
		(1-\alpha)b &1-b\alpha
\end{array}\right).
\end{equation}

Eigenvalues of the linear operator are 
\begin{equation}\label{ev}	\lambda_1=1, \ \ \lambda_2=1-(1-\alpha)a-\alpha b.
	\end{equation} 

For any $a,b \in [0,1]$, $a+b\ne 0$, $\alpha\in (0,1)$ we have $0< (1-\alpha)a+\alpha b< 1$, therefore,  $0<\lambda_2 < 1.$

By  (\ref{ya}) we define trajectory of an initial point $(x_0, u_0)$ as   
$$(x_{n+1}, u_{n+1})=M_\alpha (x_n, u_n)^T, \ \ n\geq 0.$$
Thus
\begin{equation}\label{sh}
	(x_{n}, u_{n})=M_\alpha^n\, (x_0, u_0)^T, \ \ n\geq 1.
\end{equation}
Therefore we need to find $M_\alpha^n$. To find it we use a little Cayley-Hamilton Theorem\footnote{https://www.freemathhelp.com/forum/threads/formula-for-matrix-raised-to-power-n.55028/}
to obtain the following formula
$$
M_\alpha^n={\lambda_2 \, \lambda^n_1- \lambda_1 \, \lambda^n_2\over \lambda_2-\lambda_1}\cdot I_2+ {\lambda^n_2- \lambda^n_1\over \lambda_2-\lambda_1}\cdot M_\alpha,
$$
where $I_2$ is $2\times 2$ unit matrix and $\lambda_1, \lambda_2$ are eigenvalues (defined in (\ref{ev})). 

By explicit formula (\ref{ev}) we get the following limit
$$
\lim_{n\to \infty}M_\alpha^n={\lambda_2 \over \lambda_2-\lambda_1}\cdot I_2- {1\over \lambda_2-\lambda_1}\cdot M_\alpha={1\over (1-\alpha)a+\alpha b}\cdot \left(	\begin{array}{cc}
	\alpha b &\alpha a \\[2mm]
	(1-\alpha)b &(1-\alpha)a
\end{array}\right).
$$
Using this limit, for any initial point $(x_0, u_0)\in [0,\alpha]\times [0,1-\alpha]$ we get
\begin{equation}\label{s}
\lim_{n\to\infty}(x_{n}, u_{n})=\lim_{n\to\infty}M_\alpha^n\, (x_0, u_0)^T={bx_0+au_0\over (1-\alpha)a+\alpha b}\cdot(\alpha, 1-\alpha)\in {\rm Fix}(W_\alpha).
\end{equation}
By (\ref{s}) we obtain
\begin{lemma} For any initial point $(x_0, y_0, u_0, v_0)\in S^3\setminus (X_0\cup X_1)$ there exists $\alpha\in (0,1)$ such that $(x_0, y_0, u_0, v_0)\in X_\alpha$ and the trajectory of this initial point (under operator $W$, defined in (\ref{yq})) has the following limit
$$		\lim_{n\to\infty}(x_{n}, y_n, u_{n}, v_n)=$$
		$$\left(A(x_0, u_0)\alpha, A(y_0, v_0)\alpha, A(x_0, u_0)(1-\alpha), A(y_0, v_0)(1-\alpha)\right)\in {\rm Fix}(W), 
	$$
	where 
	$$A(x,u)={bx+au\over (1-\alpha)a+\alpha b}.$$
\end{lemma}

In this lemma we note that $\alpha=x_0+y_0$ and $1-\alpha=u_0+v_0$, therefore, the part 2 of Theorem follows, where limit point of trajectory of each initial point is given as function of the initial point only. Theorem is proved. 
\end{proof}

\section{Biological interpretations}
The results of Theorem \ref{tm} have the following biological interpretations:

	Let $t=(x_0, y_0, u_0, v_0)\in \mathcal S^{3}$ be an initial state (the probability distribution on the set $\{A_1B_1, A_1B_2, A_2B_1, A_2B_2\}$ of gametes).
Theorem \ref{tm} says that, as a rule, the population tends to an equilibrium state with the passage of time.

 Part 1 of Theorem \ref{tm} means that if at an initial time we had only two gametes then the (initial) state remains unchanged. 
		
Part 2	means that depending on the initial state future of the population is stable: gametes survive with probability
$$	A(x_0, u_0)(x_0+y_0), A(y_0, v_0)(x_0+y_0), A(x_0, u_0)(u_0+v_0), A(y_0, v_0)(u_0+v_0)$$
respectively. 
From the existence of the limit point of any trajectory and from the explicit form of ${\rm Fix}(W)$ it follows that 
$$\lim_{n\to \infty}(y_nu_n-x_nv_n)=0.$$  
This property, biologically means (\cite[page 69]{E}), that the 
population asymptotically goes to a state of linkage equilibrium with respect to two loci. 
\section*{ Acknowledgements}
Rozikov thanks Institut des Hautes \'Etudes Scientifiques (IHES), Bures-sur-Yvette, France for support of his visit to IHES. 
The work was partially supported by a grant from the IMU-CDC.

\end{document}